\newtheorem{corollary}{Corollary}
\newtheorem{definition}{Definition}
\newtheorem{lemma}{Lemma}
\newtheorem{theorem}{Theorem}
\newtheorem*{Ntheorem}{Theorem}
\newtheorem{remark}{Remark}
\DeclareMathOperator{\suminf}{\mathbin{\between\!\!\!\sum}}
\def\Ykase{\foreignlanguage{russian}{Y}} 
\begin{document}
\selectlanguage{english}

\title{Attempting to remove infinites from divergent series: Hardy will hardly help.}

\author{Mario Natiello\thanks{Centre for Mathematical Sciences,
    Lund University, Box 118, S-22100, Lund, Sweden, {\tt Mario.Natiello@math.lth.se} (corresponding
    author).}\\ and \\
Hernán G Solari\thanks{Departamento de Física, FCEN-UBA e
    IFIBA-CONICET, Argentina, {\tt solari@df.uba.ar}}}

\maketitle

\begin{abstract}
The consequences of adopting other definitions of the concepts of sum and
convergence of a series are discussed in the light of historical and
epistemological contexts.  We show that some divergent series appearing in the
context of renormalization methods cannot be assigned finite values in a form
consistent with Hardy's axioms without at the same time equating one to zero,
thus destroying the mathematical building. We show that if the replacements for
the concept of sum of a series are required to be associative, to be invariant
under finite permutations of the terms and dilution, further restrictions
emerge. We finally discuss the epistemological costs of accepting these
practices in the name of instrumentalism.
\end{abstract}

{\bf Keywords:} divergent series, Hardy, Cesaro, Feynman 

\section{Introduction}

The possibility of assigning a finite value to divergent series has
recently made it to the news\footnote{See the New York edition of New York
  Times, page D6 of February 4, 2014, on-line at \\ 
\url{http://www.nytimes.com/2014/02/04/science/in-the-end-it-all-adds-up-to.html?smid=fb-share&_r=1}} in a way that is unusual for science or 
mathematics news. Indeed, this news and even some related Youtube videos 
seem to lie halfway between joke and serious stuff, but in the end it 
turns out that these contributions are intended (by their authors) to 
be serious. 

It is then compelling to assess the scientific relevance of these
methods and in particular the issue of (logical) consistency of these
methods in relation to the body of mathematical knowledge, as well as
their epistemological implications.

In Section \ref{bgr} we display the mathematical background of
the problem. Section \ref{stat} contains the statement of the main results of
this work along with their proofs. Further, we discuss historical and 
epistemological issues in Section \ref{disc} while Section \ref{conc} is 
devoted to conclusions. 

\section{Background on Series}\label{bgr}

\subsection{Sum of a Series}
We follow here standard textbooks \cite[p. 383-4]{apos67} reviewing
the definition of series and some of its basic properties, focusing
on what will be relevant for the coming Sections.

Consider a sequence $\{a_0,a_1,a_2,\cdots\}$ and also its sequence
of partial sums $\displaystyle S_N=\sum_{k=0}^N a_k$. This sequence
of partial sums is called an {\em infinite series}, or simply {\em series}.

\begin{definition}\label{basic}
If there exists a number $s$ such that  $s=\lim_{N\to\infty}S_N$ we 
say that the series $\displaystyle\sum_{k=0}^\infty a_k$ is convergent 
with sum $s$ and write $\displaystyle\sum_{k=0}^\infty a_k=s$. Otherwise, 
we say that the series is {\em divergent}.
\end{definition}

We may further distinguish among divergent series those where the
$\lim_{N\to\infty}S_N$ is $+\infty$ or $-\infty$ on the one hand
(we may call them {\em divergent to infinity}) and
those where this limit does not exist.

\begin{definition}\label{conv}
A convergent series $\displaystyle\sum_{k=0}^\infty a_k$ 
is called {\em absolutely convergent} if the series 
$\displaystyle\sum_{k=0}^\infty |a_k|$ is convergent. Otherwise, 
it is called {\em conditionally convergent}.
\end{definition}

\subsubsection{Basic Properties}
We follow an approach inspired on the posthumous book by 
Hardy\cite{hard49} on divergent series.
For convergent series, the following properties can be demonstrated
as theorems:
\begin{itemize}
\item[(A)] For any real $k$, 
           $\displaystyle \sum_{n}a_{n}=s\Rightarrow\sum_{n}(ka_{n})=ks$.
\item[(B)] $\displaystyle\sum_{n}a_{n}=s$ and 
           $\displaystyle\sum_{n}b_{n}=r\Rightarrow\sum_{n}(a_{n}+b_{n})=s+r$.
\item[(C)] $\displaystyle\sum_{n=0}a_{n}=s\Leftrightarrow\sum_{n=1}a_{n}=s-a_{0}$.
\end{itemize}
The proofs are a direct application of the properties of the limit of 
a sequence (in this case the sequence of partial sums).
\begin{remark}
Properties (A) and (B) are recognised as {\em linearity} and (C) is
called {\em stability}. They extend the natural properties of sums
for the sequence of partial sums $S_N$ all the way through the limit.
\end{remark}
An immediate generalisation of (C) to finitely many operations is the 
following
\begin{corollary}
\label{c1} For any positive integer $N$, 
\begin{equation*}\sum_{n=0}a_{n}=s
    \Leftrightarrow\sum_{n=N}a_{n}=s-\sum_{k=0}^{N-1} a_k.
\end{equation*}
\end{corollary}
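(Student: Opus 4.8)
The plan is to prove the corollary by induction on $N$, taking property (C) as both the base case and the engine of the inductive step. For $N=1$ the claimed equivalence reads $\sum_{n=0}a_n = s \Leftrightarrow \sum_{n=1}a_n = s - a_0$, which is exactly property (C), so the base case requires no work.

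For the inductive step I would assume the statement for a fixed $N$ and apply property (C) once more, this time to the tail series $\sum_{n=N}a_n$. The only technical point is that (C) is stated for a series indexed from $0$, so I would first reindex the tail by setting $b_m = a_{m+N}$, so that $\sum_{m=0}b_m = \sum_{n=N}a_n$ and $b_0 = a_N$. Property (C) applied to $\{b_m\}$ then yields $\sum_{n=N}a_n = t \Leftrightarrow \sum_{n=N+1}a_n = t - a_N$ for any candidate value $t$.

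Combining the two equivalences is routine. The inductive hypothesis gives $\sum_{n=0}a_n = s \Leftrightarrow \sum_{n=N}a_n = s - \sum_{k=0}^{N-1}a_k$, and substituting $t = s - \sum_{k=0}^{N-1}a_k$ into the reindexed instance of (C) produces $\sum_{n=N+1}a_n = t - a_N = s - \sum_{k=0}^{N}a_k$, which is precisely the statement for $N+1$. This closes the induction.

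I expect no genuine obstacle here; the result is, as the text says, \emph{immediate}. The one place to stay attentive is the index bookkeeping, ensuring that $b_0 = a_N$ (rather than $a_{N-1}$ or $a_{N+1}$) and that the summation limits $\sum_{k=0}^{N-1}$ and $\sum_{k=0}^{N}$ remain consistent across the step. An alternative argument bypasses induction altogether: for every $M \geq N$ the $M$-th partial sum of the tail equals $S_M - \sum_{k=0}^{N-1}a_k$, so by the limit laws one sequence of partial sums converges if and only if the other does, with limits differing by the constant $\sum_{k=0}^{N-1}a_k$. Either route settles the claim.
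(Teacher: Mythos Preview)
Your proposal is correct and matches the paper's approach: the paper offers no detailed proof, simply calling the corollary ``an immediate generalisation of (C) to finitely many operations,'' which is exactly what your induction formalises. Your alternative partial-sums argument is also valid and equally in the spirit of the surrounding text.
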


We list here other natural properties of convergent series 
extrapolated from finite sums 
via the limit properties for the sequence of partial sums.
\begin{corollary}\label{CD} (a) \textbf{Associativity} If the series 
$\sum_{n}a_{n}$ has a (finite or infinite) sum, then the series 
$\sum_{k}b_{k}$ obtained via $b_{k}=a_{2k}+a_{2k+1}$ for some or all
nonnegative integers $k$, has the same sum.\\
(b1) \textbf{Commutativity} If the series
$\sum_{n}a_{n}$ has a (finite or infinite) sum then the series
$\sum_{n}b_{n}$ obtained via $(b_{2k},b_{2k+1})=P(a_{2k},a_{2k+1})=(a_{2k+1},a_{2k})$
for some or all nonnegative integers $k$, has the same sum ($P$
is the nontrivial permutation of $2$ elements).\\
(b2) For series having the commutativity property, 
finite compositions of permutations of order up to $N$ (where $N$ a positive 
integer) do not alter the sum of the series.\\
(c) \textbf{Dilution} If the series with elements $a_{0},a_{1},a_{2},\cdots$
has a (finite or infinite) sum then the series with elements 
$a_{0},0,a_{1},0,a_{2},0,\cdots$ 
i.e., inserting a zero between some or all pairs of elements in the original
sequence, has the same sum. \end{corollary}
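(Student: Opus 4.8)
The plan is to prove each of the three properties—associativity, commutativity, and dilution—by showing that the new series' sequence of partial sums is a \emph{subsequence} (or a closely related sequence) of the original sequence of partial sums $S_N$, and then invoking the elementary fact that if $\lim_{N\to\infty} S_N = s$ (finite or infinite), then every subsequence converges to the same limit $s$. This reduces all three claims to bookkeeping about which partial sums survive the regrouping, reordering, or insertion of zeros.

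For associativity (part (a)), I would let $T_M = \sum_{k=0}^{M} b_k$ denote the partial sums of the grouped series. When the grouping $b_k = a_{2k} + a_{2k+1}$ is applied to all pairs, each $T_M$ equals exactly $S_{2M+1}$, so $\{T_M\}$ is precisely the subsequence of odd-indexed partial sums of $\{S_N\}$; since the full sequence converges to $s$, so does this subsequence. When the grouping is applied only to \emph{some} pairs, each partial sum $T_M$ still coincides with some $S_{N(M)}$ with $N(M)\to\infty$ as $M\to\infty$, because grouping a pair $(a_{2k},a_{2k+1})$ into a single term only deletes the intermediate partial sum $S_{2k}$ from the list while retaining $S_{2k+1}$; thus $\{T_M\}$ is again a subsequence of $\{S_N\}$ and inherits the limit $s$.

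For dilution (part (c)), I would observe that inserting zeros produces a series whose partial sums are $S_0, S_0, S_1, S_1, S_2, \dots$ (each original partial sum repeated, when zeros are inserted after every term) or more generally a sequence in which every value taken is some $S_N$ and every $S_N$ is eventually attained; hence the limit is unchanged. Commutativity (part (b1)) is slightly different since swapping $a_{2k}\leftrightarrow a_{2k+1}$ does alter individual partial sums: here I would note that the partial sums of the permuted series agree with those of the original at every \emph{even} index $2k+1$ (the two swapped terms sum to the same value regardless of order), so $T_{2k+1}=S_{2k+1}$, giving a subsequence converging to $s$; for the odd intermediate indices one checks the difference is a single bounded term $a_{2k}-a_{2k+1}\to 0$ (a consequence of $S_N\to s$ finite), so the whole sequence converges. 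Part (b2) then follows by induction: a composition of permutations of order up to $N$ can be realized as finitely many applications of the block-swap mechanism, and each application preserves the sum by (b1).

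The main obstacle I anticipate is the precise handling of the phrase ``for some or all'' in parts (a) and (c), and the infinite case in (b1). For the infinite-limit case ($s=\pm\infty$) in commutativity, the argument that $a_{2k}-a_{2k+1}\to 0$ breaks down because one cannot conclude the terms tend to zero; I would instead argue directly that the permuted partial sums, being squeezed between consecutive original partial sums that both tend to $+\infty$ (or $-\infty$), must themselves diverge to the same infinity. The ``some'' cases require care to confirm that the index map $M\mapsto N(M)$ is strictly increasing and unbounded even when only finitely many or sparsely chosen operations are performed; this is where I would be most careful to avoid an accidental gap, since a genuinely infinite collection of arbitrary groupings must still yield partial sums that are a subsequence of the original.
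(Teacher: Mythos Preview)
Your overall strategy---show that the new partial sums form a subsequence (or a sequence interlaced with a subsequence) of the original $\{S_N\}$---is exactly the paper's approach, and your treatment of (a) and (c) matches the paper's almost verbatim.

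The genuine gap is your proposed squeeze argument for the infinite case of (b1). The intermediate permuted partial sum is $T_{2k}=S_{2k-1}+a_{2k+1}$, and there is no reason this lies between two consecutive $S_N$'s. Concretely, take $a_{2k}=2^{k+1}$ and $a_{2k+1}=-2^{k}$: then $S_N\to+\infty$, yet after swapping every pair one computes $T_{2k}=-1$ for all $k$, so the permuted series does \emph{not} diverge to $+\infty$. Thus (b1) as stated is actually false for infinite sums, and no argument can rescue that case; the paper's own one-line proof (``every other partial sum coincides with the original ones'') silently ignores this as well. Your finite-sum argument via $a_n\to 0$ is correct and is the only case that stands.

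A smaller point: your reduction of (b2) to (b1) by ``finitely many applications of the block-swap mechanism'' does not work as written, because (b1) only permits swaps within the fixed pairs $(a_{2k},a_{2k+1})$; these generate an abelian group of order-$2$ swaps and cannot produce, say, the $3$-cycle $(a_0,a_1,a_2)\mapsto(a_1,a_2,a_0)$. The paper instead argues (b2) directly: the partial sums coincide every $N$ steps, and in between they differ by a finite sum of tail terms tending to zero.
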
 
\begin{proof}For associativity, collecting up to $N$ terms corresponds to picking a 
subsequence from the original sequence of partial sums, having thus the
same limit. For (b1) every other partial sum coincides with the original
ones. For (b2), the partial sums of the new series coincides with the original
one every $N$ steps. In between, they differ at most in a finite sum of terms 
that goes to zero for $k\to\infty$. Hence, both sequences of partial sums 
have the same limit. As for 
dilution, if one takes a convergent sequence $\{S_N\}$ and duplicates its 
terms: $S_1, S_1, S_2, S_2, \cdots$, the new sequence has the same limit 
as the original one. 
Hence, dilution does not alter the sum of the series.
\end{proof}

The above properties can be arbitrarily (but finitely) combined, without
altering the sum of a series. However, more drastic rearrangements need not
preserve the sum unless the series is absolutely convergent.
In fact, any {\em rearrangement} of an
absolutely convergent series produces a new series with the same sum as the
original one. However, invariance in front of arbitrary rearrangements of terms
does not hold for conditionally convergent series. This is the content of {\em
Riemann rearrangement theorem} \cite[p. 413]{apos67}.
\begin{Ntheorem}[Riemann] Let $\sigma(n)$ be an injective function of the
positive integers and $K$ some real number. Suppose that
$\{a_1,a_2,a_3,\cdots\}$ is a sequence of real numbers, and that
$\sum_{n=1}^\infty a_n$ is conditionally convergent. Then there exists a {\em
rearrangement} $\sigma(n)$ of the sequence such that $\sum_{n=1}^\infty
a_{\sigma (n)} = K$.  The sum can also be rearranged to diverge to $\pm\infty$
or to fail to approach any limit.\end{Ntheorem}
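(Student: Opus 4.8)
The plan is to split the sequence into its positive and negative parts and to exploit that each part sums to $+\infty$ while the individual terms vanish. Concretely, I would set $p_n=\max(a_n,0)$ and $q_n=\max(-a_n,0)$, so that $a_n=p_n-q_n$ and $|a_n|=p_n+q_n$. Let $(P_k)$ be the subsequence of strictly positive terms of $(a_n)$ listed in their original order, and $(Q_k)$ the subsequence of absolute values of the strictly negative terms, also in order.

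The first and most important step is a structural lemma: for a conditionally convergent series, both $\sum_k P_k=+\infty$ and $\sum_k Q_k=+\infty$. This I would prove by contradiction using properties (A)--(B). If both converged, then $\sum_n|a_n|=\sum_k P_k+\sum_k Q_k$ would converge, contradicting that the convergence is merely conditional; if exactly one converged, then $\sum_n a_n=\sum_k P_k-\sum_k Q_k$ would diverge, contradicting convergence. Hence both diverge to $+\infty$. I would also record that convergence of $\sum_n a_n$ forces $a_n\to 0$, and therefore $P_k\to 0$ and $Q_k\to 0$.

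Next I would describe the greedy rearrangement. Starting from an empty sum, append the terms $P_1,P_2,\dots$ in order until the running partial sum first exceeds $K$; this phase terminates precisely because $\sum_k P_k=+\infty$. Then append $-Q_1,-Q_2,\dots$ in order until the partial sum first falls below $K$, which terminates because $\sum_k Q_k=+\infty$. Alternating these two phases forever consumes at least one new positive (resp.\ negative) term at each turn from an inexhaustible supply, so every original index is eventually placed and $\sigma$ is a genuine rearrangement. The convergence claim then follows from a turning-point estimate: at each transition the overshoot of the partial sum past $K$ is bounded by the magnitude of the single last term appended, and since those terms are drawn from sequences tending to $0$, the overshoots tend to $0$; a routine $\varepsilon$--$N$ argument gives convergence to $K$. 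For the auxiliary claims, divergence to $+\infty$ is obtained by the same scheme with an increasing target (exceed $k$ in the $k$-th positive phase, dipping back using only one negative term), symmetrically for $-\infty$, and failure of convergence by alternating between two distinct targets $K_1<K_2$, forcing $\liminf$ and $\limsup$ of the partial sums to differ.

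The main obstacle is the bookkeeping in the construction: I must verify that each phase really does terminate --- which is exactly where the divergence of \emph{both} part-series is used --- and that the alternation exhausts every index rather than stalling in a single phase. Once the ``both parts diverge'' lemma is secured, together with $a_n\to0$, the remainder is careful but essentially routine.
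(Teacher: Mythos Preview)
The paper does not actually prove this theorem: it is quoted as a classical result with a citation to Apostol, and the text moves directly from the statement to a remark illustrating it with the alternating harmonic series. There is therefore no ``paper's own proof'' to compare against.

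Your argument is the standard classical proof and is correct in outline and in its key steps: the lemma that both $\sum P_k$ and $\sum Q_k$ diverge (together with $a_n\to 0$) is exactly the structural fact needed, and the greedy overshoot/undershoot construction with the single-term overshoot bound is the usual way to finish. One small bookkeeping point to tighten: as written, you enumerate only the strictly positive and strictly negative terms, so any $a_n=0$ is never placed by the alternating phases; you should remark that such zero terms can be inserted (say one at each phase change) without disturbing the partial-sum estimates, so that $\sigma$ is a bijection of the full index set rather than merely an injection onto the nonzero indices.
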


\begin{remark}
A classical example of rearrangement is the alternating series
$\displaystyle a_n=\{\frac{(-1)}{n}^{n+1}\}$, $n\ge 1$.  This series sums to
$s=\ln{2}$. Create a new series by dilution
adding one zero before each element of the series and dividing by $2$, i.e.,
$\displaystyle \{0,a_1/2,0,a_2/2,\cdots\}$. Combine then this series and the
original one as in property (B). The new series has sum $s=\frac{3}{2}\ln{2}$.
However, after disregarding the intermediate zeroes, the combined series is a
rearrangement of the original one, where the negative terms (for $n=2k$) appear
every third element instead of every other element. The new series adds two
positive numbers for every negative contribution, thus subtracting the 
negative contributions in a different way from that in the original series.
\end{remark}
\begin{remark}
All conditionally convergent series can be decomposed into two monotonic 
series: one with the positive terms, diverging to $+\infty$ and another 
with the negative terms, diverging to $-\infty$. 
What Riemann's Rearrangement Theorem teaches us is that if one wishes to 
interpret the sum of such a series pictorially as the outcome from 
``cancellation of both infinities'', then there is actually not one way 
to do it, but infinitely many, depending of the order in which the 
elements of the two participant series are picked up. However, each 
possible result is the unique limit of a specific sequence of partial 
sums.
\end{remark}

\subsection{Signification}

Mathematics can be regarded as a process of successive abstractions originating
in real-life situations. Thus, natural numbers abstract the process of counting
objects and involve the abstraction of the concept of addition as well. Instead
of saying: {\em one goat and another goat makes two goats and another goat
makes three goats \dots} and the same for all indivisible objects, we say
$1+1+1\dots$ abstracting away all objects and {\em dressing} the final answer
with them again (I counted $n$ {\em goats}). We increase our ability of
counting moving to objects with more elaborated properties (e.g., divisible
objects or portions of objects, debts, missing objects), producing the realms
of integers, rationals, etc. The properties of addition are {\em extended}
(i.e., preserving its original properties) to these higher levels of
abstraction. The operations of abstraction and its inverse dressing relate
mathematics to the material world. We call them more precisely abstraction and
{\em signification}. 

The process of abstraction is also known as {\em idealization} and in physics
is historically linked with Galileo Galilei and his discussion of free fall.
\cite[pp. 205]{gali38} (for an English translation see \cite[pp. 170]{gali14})
Insight on the process of signification can also be traced back to Galileo's words 
announcing that mathematics is the language of the universe \cite{gali23}, 
thus recognising mathematics as belonging to the realm of the material
world.

Hence, when addressing issues of the material world and its sciences,
mathematical objects retain a specific signification. Any new mathematical
object introduced along the investigation is related by abstraction and
signification on one hand to the material world, on the other hand to
mathematics where the object becomes context independent by the very process of
abstraction. Thus, when counting apples we use the same addition than when we
count goats, nodes in a vibrating string or smiling faces.
The reciprocally inverse processes of {\em abstraction} and {\em signification}
lie in the foundations of any attempt to understand the material world with
mathematical tools, and cannot be disrupted nor neglected in any of its parts.

\subsection{Minimal Requirements} Let us suppose that we want to resign
Definitions \ref{basic} and \ref{conv} and produce another definition related
to the sequence $\{a_0,a_1,a_2,\cdots\}$.  We require however to keep as much
as possible of the original properties of series summation. We raise then
properties (A-C) to the status of axioms, while replacing the series symbol by
something new, since that symbol received its meaning in Definition \ref{basic}
which we are now resigning.  Inspired by Hardy in \cite{hard49}, we assume
hereafter that axioms (A-C) hold for the new object $\suminf_n a_n$. Each new
method of assigning values to infinite sequences should provide its own
definition (as well as signification) for this object. Corollary \ref{c1} now
reads: $\displaystyle \suminf_{n=0}a_{n}=s
\Leftrightarrow\suminf_{n=N}a_{n}=s-\sum_{k=0}^{N-1} a_k$. The last term of the
RHS is the plain sum of a finite set of numbers.

We will distinguish those methods that sum convergent series and series 
diverging to infinity in the usual way, namely,

\begin{definition}[(see p. 10 in \cite{hard49})]\label{reg} (a) 
A method\footnote{The symbol {\Ykase} is inspired in the Cyrillic word
\foreignlanguage{russian}{указ} meaning decree or edict (formally: {\em imposition}).}%
 $\Ykase$ assigning a finite
value to a series is called {\em regular} if this value coincides
with the standard sum in the case of standard convergent series.\\
(b) A regular method $\Ykase$ is called {\em totally regular} if series
diverging to $\pm\infty$ with the standard definition 
also diverge to $\pm\infty$ in $\Ykase$. \end{definition} 

Axioms (A-C) may be regarded as a minimal compromise. Any method which is {\bf
not} linear and stable cannot be seriously considered as an alternative to the
sum of a series (it would be either not linear or not finitely related to
ordinary sum). Also, in Hardy's view regularity is a minimal requirement:
whichever method not complying with standard results for standard convergent
problems cannot be seriously considered as an extension of the concept of sum.
Already in Section 1.4 of Hardy's book 
it is noted that the method $\mathfrak{E}$ of
analytic extension to assign values for divergent series 
is not regular.
We will comment on this method in Subsection \ref{sierp}.

\section{Statement of Results}\label{stat}

We state now the main results of this work:
\begin{theorem}\label{T1} Any method $\Ykase$ assigning a finite number to 
$\displaystyle\suminf_n 1$ (proposed replacement for $1+1+1+1+1+\cdots$)
is (i) \textbf{not totally regular}, (ii) \textbf{not regular} and 
(iii) \textbf{contradictory}. \end{theorem}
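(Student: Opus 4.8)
The plan is to set $\suminf_n 1 = s$ for some finite $s$ (the hypothesis) and to extract all three conclusions from the single structural fact that the constant sequence $(1,1,1,\dots)$ coincides with each of its index shifts. First I would dispose of (i): the series $1+1+1+\cdots$ diverges to $+\infty$ under Definition \ref{basic}, so by Definition \ref{reg}(b) a totally regular method would be forced to make it diverge to $+\infty$ as well. Since $\Ykase$ instead returns the finite value $s$, total regularity fails immediately, independently of anything else.

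The heart of the argument is (iii). I would apply stability in the form of Corollary \ref{c1} with $N=1$ to the sequence $a_n\equiv 1$, obtaining $\suminf_{n=1} 1 = s - a_0 = s-1$. The crucial observation is that the tail series $\suminf_{n=1} 1$ is presented to the method as the very same value sequence $(1,1,1,\dots)$ as the original series, so $\Ykase$ must assign it the same number $s$. Equating the two expressions yields $s = s-1$, that is, $0=1$. This is the contradiction announced in the abstract: the assignment cannot coexist with ordinary arithmetic.

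For (ii) I would show that this collapse propagates to genuinely convergent series, so that $\Ykase$ cannot even be regular. Granting $0=1$, for any real $a$ one has $1\cdot a = 0\cdot a$, hence $a=0$; so applying axiom (A) to an arbitrary standard-convergent series $\suminf_n a_n = t$ with the two scalars $k=1$ and $k=0$ forces $t = 0$. Since standard convergent series such as $\suminf_n 2^{-n} = 2$ have nonzero sums, $\Ykase$ necessarily disagrees with the standard sum, and is therefore not regular.

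The main obstacle, and the step deserving the most care, is the identification used in (iii): one must argue that $\Ykase$ depends only on the sequence of values fed to it, so that the shifted tail $(1,1,1,\dots)$ and the original $(1,1,1,\dots)$ receive a common value. It is precisely this self-similarity of the constant sequence under shifting that turns stability from an innocuous bookkeeping rule into a contradiction; once that point is secured, every remaining part of the proof is a short consequence of axioms (A)--(C).
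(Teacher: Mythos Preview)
Your argument is correct and rests on the same structural observation as the paper---the constant sequence $(1,1,1,\dots)$ is invariant under the shift, so stability forces two distinct values on it. The execution differs mainly in part (ii). The paper does not derive non-regularity from the contradiction; instead it first uses (C) to obtain $\Ykase(\{0,1,1,1,\dots\})=s$, then (B) to subtract term-wise and conclude $\Ykase(\{1,0,0,0,\dots\})=0$, exhibiting a concrete convergent series (with standard sum $1$) that $\Ykase$ mis-sums. Only afterwards does it push on to the contradiction $1=\Ykase(\{0,0,0,\dots\})=-1$. Your route---prove $0=1$ first and then let the principle of explosion deliver non-regularity---is logically valid, but the detour through ``$1\cdot a=0\cdot a$ hence $a=0$'' is really just \emph{ex falso quodlibet} dressed in axiom (A), and it does not identify which convergent series actually witnesses the failure. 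The paper's ordering buys a sharper statement: it names the specific series $1+0+0+\cdots$ whose $\Ykase$-value goes wrong, independently of the eventual collapse.
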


\begin{theorem}\label{T2} Any method $\Ykase$ assigning a finite number to 
$\displaystyle\suminf_n n$ (proposed replacement for $1+2+3+4+5+\cdots$) 
is (i) \textbf{not totally regular}, (ii) \textbf{not regular} and 
(iii) \textbf{contradictory}. \end{theorem}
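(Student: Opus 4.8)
The plan is to reduce Theorem~\ref{T2} to Theorem~\ref{T1}: I will show that any method $\Ykase$ assigning a finite value $s$ to $\suminf_n n$ is forced, by the axioms (A)--(C), to assign a finite value to $\suminf_n 1$ as well, and then invoke Theorem~\ref{T1} to obtain all three conclusions at once.

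First I would record the trivial half. Since $1+2+3+\cdots$ diverges to $+\infty$ in the sense of Definition~\ref{basic}, Definition~\ref{reg}(b) requires a totally regular method to return $+\infty$; assigning instead the finite value $s$ already yields (i), \textbf{not totally regular}, with no appeal to the remaining axioms.

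For the substantive part I would exploit stability together with a one-step shift. Writing $a_n=n$ for $n\ge 0$ and $s=\suminf_{n=0} n$, Corollary~\ref{c1} with $N=1$ gives $\suminf_{n=1} a_n = s-a_0 = s$. The tail sequence $(a_1,a_2,a_3,\dots)=(1,2,3,\dots)$ is literally the sequence whose $n$-th term is $n+1$, so this reads $\suminf_{n=0}(n+1)=s$. Both $\suminf_{n=0}(n+1)$ and $\suminf_{n=0} n$ now carry finite values, so linearity applies: using (A) with $k=-1$ on the second series and then (B) to combine the two, I obtain
\begin{equation*}
\suminf_{n=0}\big[(n+1)-n\big]=\suminf_{n=0} 1 = s-s = 0 .
\end{equation*}
Thus $\Ykase$ assigns the finite value $0$ to $\suminf_n 1$, and Theorem~\ref{T1} applies verbatim, delivering (ii) \textbf{not regular} and (iii) \textbf{contradictory} (and, redundantly, (i) again).

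The step I expect to be delicate is the shift-and-reindex move: one must be sure that the object produced by Corollary~\ref{c1} is a bona fide series to which linearity may be applied, i.e.\ that $\suminf_{n=1} a_n$ and $\suminf_{n=0}(n+1)$ denote the very same sequence of terms, and that both operands already carry defined finite values before (A) and (B) are invoked. Once this bookkeeping is granted the algebra is forced and the reduction is immediate; the entire conceptual weight rests on Theorem~\ref{T1}, whose contradiction ($\suminf_n 1=0$ combined with stability forces $0=-1$) then propagates to the present case through the single identity $\suminf_n 1 = 0$.
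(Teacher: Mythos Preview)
Your proposal is correct and follows essentially the same route as the paper: use (C) to shift the series by one position, then (A) and (B) to subtract and obtain $\Ykase(\{1,1,1,\dots\})=0$, after which Theorem~\ref{T1} carries all three conclusions. The only cosmetic differences are that the paper starts from the sequence $\{1,2,3,\dots\}$ rather than $\{0,1,2,3,\dots\}$ and appends the extra remark that iterating (C) on $\Ykase(\{1,1,1,\dots\})=0$ yields $\Ykase(\{1,1,1,\dots\})=-N$ for every $N$, giving a direct contradiction without re-invoking Theorem~\ref{T1}.
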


By {\em contradictory} we mean that incompatible statements
corresponding to $r=\Ykase(\{a_n\})=s$ for
different real numbers $r$ and $s$ can be proved in this context. 

It goes without saying that we are speaking about well-posed methods $\Ykase$
where the assigned values for $\suminf_n a_n$ are unique, whenever the
sequence belongs to the domain of the method. 
Also, the definition of regularity naturally assumes that the sequences
associated to all convergent series belong to the domain of whichever 
method $\Ykase$ is under consideration (even non-regular ones).

\subsection{Proofs}

\begin{proof}[Proof of Theorem \ref{T1}] That the method $\Ykase$ is not
totally regular is immediate since otherwise it should assign the value
$+\infty$ to the proposed expression. By (C), $\suminf_n
1\equiv\Ykase(\{1,1,1,\cdots\})$ has the same value as $0+1+1+1+\cdots$ (namely
$\Ykase(\{0,1,1,\cdots\})$) and hence the term-wise difference of both objects
by (B) satisfies: $1+0+0+0+\cdots\equiv\Ykase(\{1,0,0,0,\cdots\})=0$. Since
$1+0+0+0+\cdots$ is a convergent series with sum $1$ with the standard
definition, we conclude that the method $\Ykase$ is not regular. As for
contradiction, using (A) we obtain in the same way:
$-1+0+0+0+\cdots\equiv\Ykase(\{-1,0,0,0,\cdots\})=0$ thus establishing by (C)
that $1=\Ykase(\{0,0,0,0,\cdots\})=-1$ since both those numbers can be assigned
as value for $0+0+0+0+\cdots$, which by (C) belongs to the domain of the method
$\Ykase$. This result is contradictory with the whole body of mathematics since
obviously $1\ne -1$.  \end{proof}

\begin{proof}[Proof of Theorem \ref{T2}] That the method $\Ykase$ is
not totally regular is immediate since otherwise it should assign the value
$+\infty$ to the proposed expression. As for regularity and consistency, we will
prove that $\Ykase(\{1,1,1,1,\cdots\})$ 
belongs to the domain of $\Ykase$ and compute its value.
Let $s$ be the value of $1+2+3+4+5+\cdots$, i.e.,
$\Ykase(\{1,2,3,4,\cdots\})=s$. Then by (C),
$\Ykase(\{0,1,2,3,4,\cdots\})=s+0=s$ and also by (A),
$\Ykase(\{-0,-1,-2,-3,-4,\cdots\})=-s$. Hence, we obtain by (B):
$1+1+1+1+\cdots\equiv\Ykase(\{1,1,1,\cdots\})=0$.  Hence, this
expression belongs to the domain of the method $\Ykase$ having the value 
zero and the
previous theorem applies. Moreover, repeated application of (C) on 
$1+1+1+1+\cdots\equiv\Ykase(\{1,1,1,\cdots\})=0$, results in
$1+1+1+1+\cdots\equiv\Ykase(\{1,1,1,\cdots\})=-N$ for any
nonnegative integer $N$. This result provides an alternative proof of
contradiction in Theorem \ref{T1}.
\end{proof}

\subsection{Associativity, Commutativity and Dilution}

Let us now consider $\suminf_{n=0}(-1)^{n}$. Any method assigning a value $s$
to it should comply $s=1-s$ and hence $s=1/2$.  There exist many totally
regular methods for the purpose, the most famous of which is probably the {\em
Cesaro sum}, defined as the limit of the the sequence of successive averages of
partial sums, i.e., letting $Z_{n}=(\frac{1}{n})\sum_{k=0}^{n-1}S_{k}$ for
$n\ge 1$, (being $S_{k}$ the partial sum of the first consecutive elements of
the original series up to $k$) we have $Z_{n}=(\frac{1}{2})+(\frac{c}{n})$,
where $c=0$ or $1$ and the Cesaro sum of $\suminf_{n=0}(-1)^{n}$ is
$\lim_{n\to\infty}Z_{n}=1/2$. 
 
\begin{lemma}\label{Cesaro}Cesaro sums fulfill {\bf none} of the properties 
associativity, commutativity and dilution.\end{lemma}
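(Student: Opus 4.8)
The plan is to refute all three properties at once by testing them on a single explicit series, namely Grandi's series $a_n=(-1)^n$, whose Cesaro value $1/2$ has just been computed. For each property I would apply the prescribed operation (pairwise grouping, pairwise swapping, or insertion of zeros), form the new sequence of partial sums, and evaluate the corresponding averages $Z_n$. Because every modified series has eventually periodic partial sums, each Cesaro limit is simply the mean of the partial sums over one period, so the computations reduce to inspecting a short repeating block.

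Associativity and commutativity are the straightforward cases. For associativity, grouping consecutive pairs gives $b_k=a_{2k}+a_{2k+1}=1+(-1)=0$, so the grouped series is $0+0+0+\cdots$, with partial sums identically zero and Cesaro value $0\neq 1/2$. For commutativity, swapping the entries of every pair replaces $a_n=(-1)^n$ by $(-1)^{n+1}$, i.e. $-1+1-1+1-\cdots$; its partial sums alternate between $-1$ and $0$, so $Z_n\to -1/2\neq 1/2$. In both cases the operation is admissible under Corollary~\ref{CD}, yet the Cesaro value changes.

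Dilution is the delicate case, and the step I expect to be the main obstacle is recognising that the naive dilution does nothing. If one inserts a single zero after every term, the diluted partial sums become $S_0,S_0,S_1,S_1,S_2,S_2,\cdots$, and since the sum of the first $2m$ of them equals $2\sum_{k=0}^{m-1}S_k=2mZ_m$, the new averages reproduce $Z_m$ and the Cesaro value is unchanged. A uniform dilution therefore cannot break the property; one must instead use the freedom granted by the phrase ``between some or all pairs'' and place the zeros unevenly. I would insert one zero only after each even-indexed term, between the pairs $(a_0,a_1),(a_2,a_3),\dots$, producing $1,0,-1,1,0,-1,\cdots$, whose partial sums cycle through the period-three block $(1,1,0)$; hence $Z_n\to 2/3\neq 1/2$ and dilution fails as well. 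Having exhibited, for each property, a Cesaro-summable series whose value is altered by the corresponding operation, the lemma follows.
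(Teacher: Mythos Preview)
Your proposal is correct and follows essentially the same strategy as the paper: test all three properties on Grandi's series $\sum(-1)^n$ and show that each operation changes the Ces\`aro value away from $1/2$. Your associativity and dilution counterexamples coincide with the paper's (grouping into $0+0+\cdots$, and inserting a zero after each positive term to obtain Ces\`aro value $2/3$); for commutativity you use the full pairwise swap $(a_{2k},a_{2k+1})\mapsto(a_{2k+1},a_{2k})$ yielding $-1/2$, whereas the paper swaps $(a_n,a_{n+1})$ for odd $n$ aiming at the value $1$---your choice is in fact the cleaner one, since it matches property~(b1) verbatim. Your extra remark that \emph{uniform} dilution leaves the Ces\`aro averages unchanged (so an uneven placement of zeros is genuinely needed) is a nice clarification absent from the paper.
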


\begin{proof} For associativity just note that summing the elements 
of $\suminf_{n=0}(-1)^{n}$ pairwise
we obtain either $0+0+0+\cdots$ or $1+0+0+\cdots$ (starting the association in 
the first or in the second element of the original series), both having
different Cesaro sums and both different from $1/2$.
For commutativity, permute the elements
$(a_{n},a_{n+1})$ for all odd $n$, obtaining $1+1-1-1+1+1-1-1+\cdots$
whose Cesaro sum is unity. For dilution, insert one zero only
after each positive element, obtaining the Cesaro sum $2/3$.
\end{proof}

\begin{lemma}\label{T3} Any commutative method $\Ykase$ assigning a (finite)
value to $\suminf_{n=0}(-1)^{n}$ is contradictory. \end{lemma}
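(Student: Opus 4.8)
The plan is to exploit the commutativity property to rearrange the alternating series $1-1+1-1+\cdots$ into a form whose blocks, when regrouped, force a value different from the $1/2$ that linearity pins down. The starting observation is that any method assigning a value $s$ to $\suminf_{n=0}(-1)^n$ must satisfy $s=1/2$, as already derived from linearity and stability in the discussion preceding Lemma \ref{Cesaro}. So the strategy is to produce, using only commutativity (property (b1)--(b2)) together with the axioms (A--C), a second derivation that yields a value $\ne 1/2$, thereby establishing a contradiction in the sense defined after Theorem \ref{T2}.

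First I would apply a commutation of the type described in Corollary \ref{CD}(b1): permuting the pair $(a_{2k},a_{2k+1})$ for a suitably chosen collection of indices. Concretely, I would mimic the computation in the proof of Lemma \ref{Cesaro}, where permuting $(a_n,a_{n+1})$ for all odd $n$ turns $1-1+1-1+\cdots$ into $1+1-1-1+1+1-1-1+\cdots$. Under a commutative method this rearranged series must, by (b1)--(b2), retain the value $s=1/2$. The next step is to regroup or otherwise manipulate this permuted series using the linearity and stability axioms alone (not associativity, which the lemma does not assume) so as to extract a second equation for its value.

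The cleanest route is probably to combine the permuted series with the original via property (B): subtract the original $\suminf_{n=0}(-1)^n$ from its permutation $1+1-1-1+\cdots$ term by term. The term-wise difference is $0+2-2+0+2-2+0+\cdots$, a pattern built from the block $(0,2,-2)$, and by (A) and (B) its value must be $s-s=0$. I would then factor out the $2$ by (A) and show that the resulting series $0+1-1+0+1-1+\cdots$ is itself (after stripping leading zeros by (C)) a dilution/shift of the alternating series, so its value is forced to be $s=1/2$ as well, giving $2\cdot\frac12=1\ne 0$ --- a contradiction. The delicate point, and the main obstacle, is to carry out this last reduction \emph{without} invoking associativity or dilution, since Lemma \ref{T3} grants us only commutativity; I must make sure every regrouping is realized either as a finite permutation (legitimate under (b2)) or as an application of (A)--(C). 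If a direct term-wise subtraction does not cleanly close, the fallback is to iterate the permutation to shift more and more sign-pairs and compare the partial-sum structure of the permuted series against the original, again converting each rearrangement into a finite composition of order-$2$ permutations so that commutativity applies verbatim.

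The expected shape of the final contradiction is therefore two incompatible values, $1/2$ and some other number (most naturally $1$, matching the Cesaro value of the permuted series computed in Lemma \ref{Cesaro}), both validly assigned to the same object by a commutative linear-and-stable method. I would present it as: the method yields $s=1/2$ from the defining self-similarity, yet commutativity forces the permuted series to a different value, contradicting well-posedness (uniqueness of the assigned value). The only real care needed is bookkeeping of which axiom justifies each step, ensuring commutativity does all the rearranging work and that no appeal to the \emph{absent} associativity or dilution properties slips in.
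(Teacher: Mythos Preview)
Your plan contains a real gap at precisely the point you yourself flag as ``delicate''. After subtracting the original series from the permuted one you obtain (with period four, not three as you wrote) the sequence $0,2,-2,0,0,2,-2,0,\ldots$, and you then need to show that $0,1,-1,0,0,1,-1,0,\ldots$ carries the value $1/2$. But this sequence is a \emph{dilution} of the alternating series, and you have no tool other than (A)--(C) and commutativity to evaluate it; neither stability nor any finite permutation will collapse those interspersed zeros. Your fallback of ``iterating the permutation'' does not help either, since every such iterate still leaves you comparing diluted patterns whose values are not pinned down by the axioms at hand. So the argument, as outlined, does not close.

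The paper's proof sidesteps all of this with a one-line observation you overlooked: apply (b1) in its most literal form, swapping \emph{every} pair $(a_{2k},a_{2k+1})$. This turns $1,-1,1,-1,\ldots$ directly into $-1,1,-1,1,\ldots$, which by commutativity still has value $r$. But this same sequence is $(-1)$ times the original, so by (A) its value is $-r$. Hence $r=-r$, contradicting $r=1/2$. No subtraction, no auxiliary series, no dilution issue --- the contradiction drops out immediately from the single global pair-swap.
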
 \begin{proof}
Whichever commutative method $\Ykase$ is adopted, (C) forces
$\suminf_{n=0}(-1)^{n}=r=1-r$ and hence $r=1/2$.  However, by permuting the
elements pairwise, from
$\suminf_{n=0}(-1)^{n}\equiv\Ykase(\{1,-1,1,-1,\cdots\})=r$ we obtain
$\suminf_{n=0}(-1)^{n+1}\equiv\Ykase(\{-1,1,-1,1,\cdots\})=r$. Also, the first
expression by (A) and multiplication by $-1$ leads to
$\suminf_{n=0}(-1)^{n+1}\equiv\Ykase(\{-1,1,-1,1,\cdots\})=-r$. We obtain the
contradiction  $r=\Ykase(\{-1,1,-1,1,\cdots\})=-r$ with $r=1/2$.\end{proof} 
 
\subsection{Euler's Continuation Method $\mathfrak{E}$ \cite{hard49}}\label{sierp}
In Hardy's account of possible alternatives to standard summation, Abel's
method and Euler's continuation method $\mathfrak{E}$ are 
highlighted \cite[p. 7]{hard49} (among others). We may say that Euler glimpsed 
a possible approach in a moment in history where the concept of convergence
was not fully developed (see next Section) while Abel formulated the idea with 
exhaustive precision. In fact, Abel's second theorem \cite[p. 3]{bore28} can be stated as:
\begin{Ntheorem}[Abel] 
If $\displaystyle\sum_n a_n=s$ and 
$\displaystyle\lim_{x\to 1^-}\sum_n a_nx^n=S$, then $s=S$.\end{Ntheorem}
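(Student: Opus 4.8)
The plan is to prove the stronger and cleaner statement that the Abel limit of a \emph{convergent} series always equals its ordinary sum, and then to invoke uniqueness of the one-sided limit: since the hypothesis names that limit $S$, I will get $S=s$ for free. So the real work is to show $\lim_{x\to1^-}\sum_n a_n x^n = s$ under the single assumption $\sum_n a_n=s$.

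First I would set $S_N=\sum_{k=0}^N a_k$, which converges to $s$ by Definition \ref{basic} and is therefore bounded; this boundedness guarantees that both $f(x)=\sum_n a_n x^n$ and $\sum_n S_n x^n$ converge for every $x$ with $|x|<1$, so all the manipulations below are legitimate on the interval $(0,1)$. The key algebraic step is Abel summation by parts: writing $a_n=S_n-S_{n-1}$ with the convention $S_{-1}=0$ and reindexing the shifted series, one obtains for $|x|<1$ the identity
\[
\sum_{n=0}^\infty a_n x^n = (1-x)\sum_{n=0}^\infty S_n x^n .
\]
Since $(1-x)\sum_{n=0}^\infty x^n = 1$ on $(0,1)$, I can subtract the trivial identity $s=(1-x)\sum_{n=0}^\infty s\,x^n$ to reach the central formula
\[
f(x)-s = (1-x)\sum_{n=0}^\infty (S_n-s)\,x^n .
\]

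The analytic heart is then a head/tail split. Given $\varepsilon>0$, choose $N$ so that $|S_n-s|<\varepsilon$ for all $n>N$. The tail is controlled uniformly in $x$, namely $(1-x)\sum_{n>N}|S_n-s|x^n \le \varepsilon(1-x)\sum_{n\ge 0}x^n = \varepsilon$, whereas the head is a \emph{fixed finite} sum multiplied by the factor $(1-x)$, hence tends to $0$ as $x\to1^-$. It follows that $\limsup_{x\to1^-}|f(x)-s|\le\varepsilon$ for every $\varepsilon>0$, so $\lim_{x\to1^-}f(x)=s$; comparing with the assumed value $S$ and using uniqueness of limits gives $s=S$. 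The main obstacle — and the only nonroutine point — is precisely this balancing act: the prefactor $(1-x)$ vanishes but must be played off against a possibly large yet fixed head sum, while the tail must be made small uniformly in $x$ and independently of $N$. This interplay is exactly what expresses the regularity of Abel's method, and once the summation-by-parts identity is in hand the remaining estimates are straightforward.
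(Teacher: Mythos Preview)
Your argument is correct: it is the classical proof of Abel's limit theorem via summation by parts and the head/tail estimate, and every step (boundedness of $S_n$, convergence of the auxiliary power series on $(0,1)$, the identity $f(x)-s=(1-x)\sum_n(S_n-s)x^n$, and the $\varepsilon$-splitting) is sound.

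There is, however, nothing to compare it against in the paper. The authors do not supply their own proof of Abel's theorem; they quote it as a classical result with a citation to Borel \cite[p.~3]{bore28} and then use it only as background for their discussion of Euler's continuation method $\mathfrak{E}$. So your write-up is not an alternative to the paper's proof but rather a proof the paper chose to omit. If anything, the redundant hypothesis in the paper's formulation (assuming the Abel limit $S$ exists, when in fact its existence and equality with $s$ both follow from $\sum_n a_n=s$ alone) is exactly what your ``stronger and cleaner'' opening remark identifies; you handle it correctly by proving the stronger statement and then invoking uniqueness of limits.
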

In other words, if the series is convergent, with finite sum $s$, and if 
the power series has a (finite) limit $S$, then both numbers coincide.

Hardy's account of $\mathfrak{E}$ is that if 
$\sum_n a_nz^n$ defines an analytic function $f(z)$ in some region of the 
complex plane such that the function is properly defined along a path from 
that region up to $z=1$, then $\sum_n a_n=f(1)$. $\mathfrak{E}$ is indeed
less precise than Abel's theorem. Let us consider in which ways the assumptions
of Abel's theorem may fail in the context of $\mathfrak{E}$. Either $\{a_n\}$ 
is a divergent series or $\lim_{z\to 1}\sum_n a_nz^n$ does not exist 
(now with $z\in\mathbf{C}$).

For the first case, there are examples of power series having exactly 
the same shape in different regions of the complex plane, while defining
different functions. Then there is no uniquely defined ``$f(z_0)$''. For 
an example with the functions 
$f(z)=\displaystyle\pm\frac{1+z^2}{1-z^2}$ and $z_0=2i$ see 
\cite[p.16]{hard49}. Another example could be the geometric series 
for $f(z)=(1-z)^{-1}$ contrasting the series inspired in Riemann's zeta 
function: $g(z)=\sum_{n\ge 1} n^{1-z}$. Both series
have disjoint domains of convergence in the complex plane. For $z=1$
both series -in the context of $\suminf$ or $\Ykase()$- could be taken to 
represent $1+1+1+1+\cdots$ but the second function gives a finite 
number while the first diverges.

For the second case, Sierpinski \cite{sier16} gives an example of a
convergent series $\sum_n a_n$ and a power series derived from it 
(defining a function $f(z)$) such that 
while the power series has a limit for $x\to 1^-$ along the real axis 
(fully compatible with Abel's theorem), the limit does not exist along 
arbitrary paths $z\to 1$ in the complex plane. This has more than anecdotic
value, since Sierpinski series combined with $\mathfrak{E}$ could be used 
to ``destroy convergence'' in any convergent series. Let $r=\sum_n a_n$ be the 
sum of Sierpinski's series. Let $\sum_n b_n$ be a convergent series and 
consider the expressions $r-r+\sum_n b_n$ and $f(z)-r+\sum_n b_nz^n$.
Using Abel's method, both expressions have the same value, namely $\sum_n b_n$. 
Using $\mathfrak{E}$, the first expression still yields the same value, while 
the second one diverges.

Clearly, $\mathfrak{E}$ cannot be used as a tool in this context without
further and accurate specification. On the contrary, following Borel
\cite{bore28} we may say that Abel's approach is exhaustive and there is no 
room for improvement. Different attempts to prove the converse of Abel's
theorem after adding adequate additional hypotheses, have originated the
branch of mathematics called Tauberian theorems.

\section{Discussion}\label{disc}

\subsection{Historical Digression}

Extending on Hardy's account, it is to be noted that the modern concept
of limit was established by Cauchy around 1821. However, he could not
solve the question of uniform convergence. In fact, it is said \cite{laka76} 
that this issue worried Cauchy to the point of never publishing the second 
volume of his course of analysis, nor consenting to a reedition of the 
first. He eventually allowed the publication of the lecture notes of 
his classes by his friend and student Moigno in 1840\cite{moig40}. 
Again according to Lakatos, the distinction between point convergence and 
uniform convergence was unraveled by Seidel in 1847\cite{laka76}, 
thus completing the approach of Cauchy. The modern way of regarding 
limits and convergence could be said to originate around 1847.

\subsection{Epistemological Issues}

The idea of substituting a definition with another one is not free from
consequences. Definitions in mathematics may look arbitrary at a first glance
but they are always motivated. Fundamentally, (a) they satisfy the need of
filling a vacancy of content in critical places where precision is needed
(however, since many textbooks present definitions without discussing the
process for producing them, the epistemological requirements remain usually
obscure) and (b) they are explicitly forbidden to be contradictory or logically
inconsistent with the previously existing body of mathematics on which they
rest. In addition, when dealing with the mathematisation of natural sciences,
definitions carry a signification, which is the support for using that
particular piece of mathematics in that particular science.

While we appreciate the exploration work around concepts that has been done
over the years, we do not substitute a meaningful and established concept with
something that is inequivalent to it in the common domain of application.
Again, when understanding natural sciences, such substitution would disrupt the
signification chain. In simple words, we do not replace a meaningful content
with a meaningless one. This would be to depart from rationality, something
that is positively rejected by mathematics as a whole as well as by science in
general and by a large part of society.

Along the presentation, we cared to put limits to the 
possible relation between ${\Ykase}(\cdot)$ and ordinary sums.
In the light of the proven Theorems, it is
verified that such relation is feeble or absent. Hence, the very inspirational
root of these techniques becomes divorced from its results and effects. As
stated above, the alternative of resigning one of axioms (A-C) also destroys
any possible relation to ordinary sums.

Replacements that assign $\suminf n = a$ or 
$\suminf 1=b$ (with $a,b$ real numbers)
destroy the basis of mathematics, making it the same to have one goat that
having a million goats. 
We must emphasize that regularity is a necessary condition to preserve
signification but it is not sufficient. Whatever replacement we attempt must
provide a rationale for the method, preserving signification within
mathematics (in the chain of abstractions it belongs) and in relation to
natural sciences. For the case of series, signification is further destroyed 
along with properties such as association, permutation and dilution.
The alternative of using one or another definition depending of the matter
under study simply destroys the role mathematics as a whole. 
Instead of having {\em eternal and pure relations
accessible by reason alone} \cite{plato60}, it will turn mathematics to be
dependent of the context of use.

\subsubsection{The Epistemology of Success}

The issue of assigning a finite value to divergent series with methods that are
not regular and are contradictory under Hardy's axioms is not only material of
newspaper notes, discussion blogs or Youtube videos. It has actually reached
the surface of society from stuff published as
scientific material.
We support this statement by commenting on a couple of references.
This issue is not just a feature of these two citations, but
the standard procedure of a community: just read the references in
\cite{birr82} to find a large amount of practitioners of this community.

In \cite{nest97} we encounter an attempt to justify the use of the Riemann's
Zeta function. The authors refer to Hardy's book for the actual method. 
They use axioms A and B and the zeta function to write
equality between $\sum_{n=1}^\infty n$ and $-1/12$ (see their eq. (2.20)).
The conclusion is evident: the method does not comply with Hardy's axioms.
Furthermore, the result is false since to reach their conclusion the authors 
disregard a divergent contribution. Hence, the equal sign does not relate 
identical quantities as it should. The correct expression would be
\begin{equation}
\suminf_{n=1}^\infty n\equiv\Ykase(\{1,2,3,4,\cdots\})=-\frac{1}{12}
\end{equation}
where $\Ykase$ must be understood as the method based on Riemann's Zeta
function. Here, Theorem \ref{T2} applies.
 
Our second example, concerning Theorem \ref{T1},
is the book \cite{birr82} where on page 167 we
read ``The analytic continuation method converts a manifestly infinite series
into a finite result'' exemplifying with 
$\suminf_{n=1}^\infty 1\equiv\Ykase(\{1,1,1,\cdots\})=1/2$ (our notation, the 
authors use standard series notation) using the same procedure as \cite{nest97}. 
On p. 165 this expression was given the value $-1/2$, probably a typo. 
Needless to say, the authors do not use the symbol $\Ykase$ but they refer to
the method as a 
``formal procedure'' which leaves the matter in the ambiguities of language. 
If by formal we read {\em belonging to or constituting the form or essence of 
a thing}, we strongly disagree, since {\em essence} is the result of an 
abstracting (usually analytic) procedure \cite{hege71}. 
However, if ``formal'' is intended as in its second accepted meaning:
{\em following or according with established form, custom, or
rule}, we agree, observing that such social agreements are not a part of
science.

In defense of such procedures it is usually said that the theories using them
are among the most precise and successful in Physics.  This argumentation
claims, then, that questions of unicity of results, backward compatibility of a
method with standard convergent series, or its relation to sums (let alone
signification) are uninteresting. The value is assigned because in such a way
one obtains the ``correct'' result.  Hence they adhere to a (false)
epistemology that Dirac called {\em instrumentalism} \cite[page 185]{krag90}
and we plainly call {\em the epistemology of success}.

Hitting (what is claimed to be) the right answer is not equivalent to using the
right method. One may hit a correct answer with a wrong method just by chance,
by misunderstanding, or even by adaptation to the known answer, etc.
Paraphrasing Feyerabend's {\em everything goes}: an idea may be welcome as a
starting point without deeper considerations (within ethical limits, of
course). However, for that idea to be called {\bf scientific} it has to comply
with the scientific method.  Moreover, it has to comply with rationality.  The
attitude described by {\em something is right because it gives the correct
answer} is dangerous in many levels. The mathematical attitude is actually the
opposite (and logically inequivalent to it): {\em If it gives the wrong answer,
either the assumptions or the method in use are incorrect}\cite{popp59}. This
holds also for natural sciences, where in addition we have, through
signification, a safe and independent method to distinguish wrong answers from
right answers. Note that independency is crucial. Natural science makes
predictions that can be tested independently of the theory involved. If
verified, they give continued support to the theory, while if refuted they
indicate where and why to correct it. As a contrast, a theory making
predictions that can only be tested within itself obtains at best internal
support for being consistent, but it never speaks about Nature since
predictions are not independently testable.  In any case, having the right
answer is not a certificate of correctness (there may be an error somewhere
else) whereas having {\bf any} wrong answer is a certificate of incorrectness
(the error is ``there'').

It is worth to keep in mind the attitude taken by the founding fathers of
Quantum Electrodynamics
\begin{quote}
The shell game that we play [...] is technically called
'renormalization'. But no matter how clever the word, it is still
what I would call a dippy process! Having to resort to such
hocus-pocus has prevented us from proving that the theory of quantum
electrodynamics is mathematically self-consistent. It's surprising
that the theory still hasn't been proved self-consistent one way or
the other by now; I suspect that renormalization is not
mathematically legitimate. {\bf Richard Feynman}, 1985\cite{feyn83}
\end{quote}

\begin{quote} I must say that I am very dissatisfied with the situation,
because this so called good theory does involve neglecting infinities which
appear in its equations, neglecting them in an arbitrary way. This is just not
sensible mathematics. Sensible mathematics involves neglecting a quantity when
it turns out to be small - not neglecting it just because it is infinitely
great and you do not want it! {\bf Paul Dirac}. \cite[page 195]{krag90}
\end{quote}

\section{Concluding Remarks}\label{conc}

Definitions are not arbitrary, any extension of an established operation (such
as addition) needs to preserve the properties of the operation when applied to
objects in the original domain of definition (regularity). We have shown in
this sense that some methods proposed as extensions for the sum of convergent
series (standard definition) fail on this regard. An important example of this
failure is the method based on Riemann's Zeta function that has been associated
to renormalization procedures in some branches of physics. Moreover, regularity
is not enough, the extension needs to preserve all the mathematical
building-blocks it rests on.

It is important to indicate that attempts to justify renormalization such as
\cite{nest97} ought to be considered scientific attempts on the ground of
Popper's demarcationism\cite{popp59} since by connecting to mathematical 
subjects as the problem of infinite series they offer the rationale behind 
their procedures of scientific enquire.
The result of the examination indicates that such replacements must be rejected.

The decision to expose the issues concerning the mathematical foundations of
these matters to the general public should also be commented.  The
understanding of scientific matters is not reserved to an elite of
practitioners that guard the ``truth'' of the subject as priests of a cult.
Opening science to the scrutiny of the general public, including scientists
outside the paradigm is simply correct, as discussed by Lakatos \cite{laka76}.

The following thesis should be considered: the more than 50 years that this
matter has stayed without resolution is a demonstration that in closed elitist
communities the interest of (return for) the community may very well have
priority over the public (humane) interest.

\section*{Acknowledgments} We thank Alejandro Romero Fernández for his
valuable support in historical-philosophical issues. We thank Frank
Wikstr{\"o}m for explaining and discussing issues around Sierpinski's article.
The authors have not received (nor expect to receive) economic or symbolic
remuneration for the present work. It has been produced only because of
their love to reason.
\bibliographystyle{plain}
\bibliography{referencias}

\end{document}